\newtheorem{Thm}{Theorem}[section]
\newtheorem{Lem}[Thm]{Lemma}
\newtheorem*{claim}{Claim}
\theoremstyle{definition}
\newcommand{\Cs}{C$^\ast$}
\newcommand{\id}{\mbox{\rm id}}
\newcommand{\rg}{\mathop{{\mathrm C}_{\mathrm r}^\ast}}
\newcommand{\rc}{\mathop{\rtimes _{\mathrm r}}}
\newcommand{\rca}[1]{\mathop{\rtimes _{{\mathrm r}, #1}}}
\newcommand{\IB}{\mathbb B}
\newcommand{\IC}{\mathbb C}
\newcommand{\IN}{\mathbb N}
\newcommand{\IQ}{\mathbb Q}
\newcommand{\IR}{\mathbb R}
\newcommand{\fH}{\mathfrak{H}}
\newcommand{\cG}{\mathcal G}
\newcommand{\cM}{\mathcal M}
\DeclareMathOperator{\bigfp}{\lower0.25ex\hbox{\LARGE $\ast$}}
\title[\Cs-simplicity has no local obstruction]
{\Cs-simplicity has no local obstruction}
\author{Yuhei Suzuki}
\subjclass[2020]{Primary~ 22D25, Secondary~46L05}
\keywords{\Cs-simplicity, totally disconnected groups, uniqueness of KMS weight}
\address{Department of Mathematics, Faculty of Science, Hokkaido University,
Kita 10, Nishi 8, Kita-Ku, Sapporo, Hokkaido, 060-0810, Japan}
\email{yuhei@math.sci.hokudai.ac.jp}
\begin{document}
\maketitle
\begin{abstract}
In 2016, I solved a problem of de la Harpe in 2006: Is there a non-discrete \Cs-simple group? However the solution was not fully satisfactory as the provided \Cs-simple groups (and their operator algebras) are very close to discrete groups. All previously known examples are of this form. In this article I give yet another construction of non-discrete \Cs-simple groups. The statement in the title then follows. This in particular gives the first examples of non-elementary \Cs-simple groups (in Wesolek's sense).
\end{abstract}
\section{Introduction}
A fundamental motivation of operator algebra theory is
to give a framework to understand locally compact groups.
Successful achievements in this direction include Glimm's dichotomy theorem,
the Kasparov theory, the Baum--Connes theory, and Popa's deformation/rigidity theory. 
Also, via the (reduced) crossed product construction, locally compact groups produce interesting examples of concrete operator algebras.
On the one hand, for \emph{discrete} groups, many deep structural results
on these operator algebras have been established.
On the other hand, the \emph{non-discrete} counterpart is not yet on the comparable level.

In this article, we focus on \emph{\Cs-simplicity} \cite{Pow}, the simplicity of the reduced group \Cs-algebra,
of locally compact groups.
For discrete groups, satisfactory characterizations of \Cs-simplicity
were established in the last decade (see e.g., \cite{BKKO}, \cite{KK}).
However, the results do not (at least directly) extend to non-discrete groups.
\Cs-simplicity of non-discrete groups is still a mysterious property.
A main reason of the difficulty would be the lack of interesting examples.
Indeed even the existence of such a group, asked by de la Harpe \cite{Har},
was not known until \cite{Suzsim}.
While such groups are now found (\cite{Suzsim}, \cite{Rau2}),
all the currently known examples are
\emph{very close} to discrete groups.
(More precisely, they are essentially the projective limit of discrete groups
of particularly good form; see the Proposition in \cite{Suzsim}, which is the only previously known result to produce a non-discrete \Cs-simple group.
In particular, all these groups are \emph{elementary} in Wesolek's sense
by Theorem 3.18 of \cite{Wes}.)

In this article, we provide a new framework to
produce non-discrete \Cs-simple groups.
Note that \Cs-simple groups must be \emph{totally disconnected} by Theorem A of \cite{Rau}.
Thus our attention is naturally restricted to totally disconnected groups.
As a result of the new construction, we conclude
the statement in the title: every totally disconnected locally compact group
realizes as an open subgroup of a \Cs-simple group.
In particular we obtain the first examples of \Cs-simple groups which are \emph{non-elementary} in Wesolek's sense \cite{Wes}.
We believe that our new construction sheds new light on (non-discrete) \Cs-simplicity, and that our proof gives a new insight
on the analysis of the group and reduced crossed product operator algebras of non-discrete groups.
\section{Preliminaries}
Here we fix notations, and prove a basic lemma.
\subsection*{Notations}
Throughout the article, let $G$ be a totally disconnected locally compact group. 
We fix a left Haar measure $\mu$ on $G$.
Put $L^2(G) := L^2(G, \mu)$.
Let $\lambda \colon G \curvearrowright L^2(G)$ denote the left regular representation.
The representation $\lambda$ integrates to the $\ast$-representation of the group algebra
$C_c(G)$ on $L^2(G)$ which is given by the convolution product.
The reduced group \Cs-algebra $\rg(G)$ is the operator norm closure
of $C_c(G) \subset \IB(L^2(G))$.
For a compact open subgroup $K$ of $G$, set
\[p_K:= \mu(K)^{-1} \chi_K \in C_c(G) \subset \rg(G).\]
Observe that $p_K$ is the orthogonal projection onto the $K$-fixed point space $L^2(G)^K$. 
A theorem of van Dantzig \cite{Dan} shows that the set of compact open subgroups $K<G$ form a local basis at the identity element $e\in G$.
Hence the net $(p_K)_{K<G}$ forms an approximate unit of $\rg(G)$.
For a closed subgroup $H$ of $G$, we identify
$\rg(H)$ with the \Cs-subalgebra of the multiplier algebra $\cM(\rg(G))$ in the obvious way (cf.~\cite{Keh}).
In particular, when $H$ is open, we have $\rg(H) \subset \rg(G)$.
When $H<G$ is a closed subgroup normalized by a compact subgroup $K<G$,
we equip $\rg(H)$ with the $K$ action induced from the conjugation action $K \curvearrowright H$.

The symbol `$\otimes$' stands for the minimal tensor product of \Cs-algebras,
the Hilbert space tensor product, and the tensor product of unitary representations.
Denote by `$\rc$' the reduced \Cs-crossed product.
(The underlying actions should be always clear from the context.)
For a \Cs-algebra $A$ equipped with a compact group action $K \curvearrowright A$,
denote by $A^K$ the fixed point algebra of the $K$-action.
\subsection*{On conditional expectations}
The following lemma should be well-known for experts.
For completeness of the article, we include the proof.
\begin{Lem}\label{Lem:ce}
Let $K<G$ be a compact open subgroup.
Let $\alpha\colon G \curvearrowright A$ be a \Cs-dynamical system.
Then there is a faithful conditional expectation
\[E_K\colon p_K (A \rc G) p_K \rightarrow p_K A p_K=A^K p_K\]
satisfying
$E_K(p_K a\lambda_s p_K) = \chi_K(s) p_K a p_K$
for all $a\in A$, $s\in G$. 

The analogous statement holds true in the von Neumann algebra setting.
Moreover, in this setting, $E_K$ can be chosen to be normal.
\end{Lem}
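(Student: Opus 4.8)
The plan is to construct $E_K$ spatially, exploiting that compression by $p_K$ annihilates the ``off-diagonal'' part of $A\rc G$. First I would fix a faithful representation $A\subseteq\IB(\cH)$ for which $\alpha$ is spatially implemented by a continuous unitary representation $u\colon G\to\U(\cH)$ with $\ad(u_g)=\alpha_g$. This costs nothing: one may take $\cH:=L^2(G)\otimes\cH_0$ for any faithful $A\subseteq\IB(\cH_0)$, with $A$ acting by $(a\xi)(g):=\alpha_{g^{-1}}(a)\xi(g)$ and $u$ by left translation, and $A\rc G$ does not depend on the chosen faithful representation of $A$. Realize $A\rc G$ inside $\IB(L^2(G)\otimes\cH)$ in the standard way, $(\hat\pi(a)\xi)(g)=\alpha_{g^{-1}}(a)\xi(g)$, $\hat\lambda_s=\lambda_s\otimes 1$, and let $P:=p_K\otimes 1$, the orthogonal projection onto $L^2(G)^K\otimes\cH$. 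Writing $p_K=\mu(K)^{-1}\int_K\lambda_h\,dh$ and using covariance, one computes $P\hat\pi(a)P=\hat\pi(E^A_K(a))P$, where $E^A_K(a):=\mu(K)^{-1}\int_K\alpha_h(a)\,dh$ is the canonical faithful conditional expectation $A\to A^K$ (normal in the von Neumann setting). This reconfirms $p_KAp_K=A^Kp_K$, shows that $b\mapsto\hat\pi(b)P$ is an isomorphism $A^K\cong A^Kp_K$, and records that $\hat\pi(b)$ commutes with $P$ for $b\in A^K$.

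Next I would take the isometry $V\colon\cH\to L^2(G)\otimes\cH$, $Vv:=\mu(K)^{-1/2}\chi_K\otimes v$ (so $PV=V$ and $V^\ast V=1$), and set $E_K(x):=V^\ast xV$ for $x\in p_K(A\rc G)p_K$, read via the isomorphism above as a map into $A^Kp_K$. A short computation gives $V^\ast(p_Ka\lambda_sp_K)V=\chi_K(s)E^A_K(a)$, that is $E_K(p_Ka\lambda_sp_K)=\chi_K(s)p_Kap_K$; since such elements (together with the $p_K\hat\pi(a)\hat\lambda(f)p_K$, $f\in C_c(G)$) span a dense subalgebra, continuity forces $E_K(p_K(A\rc G)p_K)\subseteq A^Kp_K$, and specializing to $a\in A^K$, $s\in K$ shows $E_K$ restricts to the identity on $A^Kp_K$. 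Being a contractive completely positive idempotent onto the \Cs-subalgebra $A^Kp_K$, $E_K$ is a conditional expectation by Tomiyama's theorem. In the von Neumann setting the identical formula $E_K=V^\ast(\,\cdot\,)V$ works, and it is automatically normal since $x\mapsto V^\ast xV$ is $\sigma$-weakly continuous.

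The remaining, and most delicate, step is faithfulness. If $E_K(x^\ast x)=0$ then $\|xVv\|^2=\langle V^\ast x^\ast xVv,v\rangle=0$ for all $v$, so $x$ annihilates $\chi_K\otimes\cH$. The point of implementing $\alpha$ on $\cH$ is that $(A\rc G)'$ then contains all operators $\rho_t\otimes u_t$ ($t\in G$), where $\rho$ is the right regular representation: $\rho_t$ commutes with $\lambda$, and a direct check using $\ad(u_g)=\alpha_g$ shows $\rho_t\otimes u_t$ commutes with $\hat\pi(A)$. Since $P\in(A\rc G)''$, each $\rho_t\otimes u_t$ commutes with $P$, restricts to $L^2(G)^K\otimes\cH$, and commutes there with $x$; hence $x$ annihilates $(\rho_t\otimes u_t)(\chi_K\otimes v)$, a nonzero multiple of $\chi_{Kt^{-1}}\otimes u_tv$, for all $t\in G$, $v\in\cH$. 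As $K$ is open, $K\backslash G$ is discrete, so the coset indicators $\{\chi_{Kg}:g\in G\}$ span a dense subspace of $L^2(G)^K$; since also $u_t\cH=\cH$, these vectors are total in $L^2(G)^K\otimes\cH$, forcing $x=0$. The von Neumann case is identical, read $\sigma$-weakly where appropriate. The one genuinely delicate issue is precisely this: for an arbitrary faithful representation of $A$ the commutant $(A\rc G)'$ may carry no operator translating in the $L^2(G)$-direction, so one truly has to pass to a representation in which $\alpha$ is implemented; everything else (the identity $p_Kap_K=E^A_K(a)p_K$, the two intertwining computations, and the verification of Tomiyama's hypotheses) is routine.
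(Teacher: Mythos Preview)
Your proof is correct and follows essentially the same route as the paper's. The paper also chooses a faithful covariant representation $(\pi,v)$ (i.e., a faithful representation in which $\alpha$ is spatially implemented), realizes $A\rc G$ on $\fH\otimes L^2(G)$, defines $E_K$ as compression to the $\chi_K$-slot, and proves faithfulness by observing that the unitaries $v_t\otimes\rho_t$ lie in the commutant and carry $\fH\otimes\IC\chi_K$ onto a total subset of $\fH\otimes L^2(G)^K$; your $V^\ast(\cdot)V$ and $\rho_t\otimes u_t$ are exactly these ingredients with the tensor factors swapped, and your explicit invocation of Tomiyama and the averaging map $E^A_K$ just spell out what the paper records as ``direct computations.''
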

\begin{proof}
We only show the \Cs-algebra case.
The proof in the von Neumann algebra case is identical to the \Cs-algebra case.

Take a covariant representation $(\pi, v)$ of $(A, \alpha)$ on $\fH$
such that $\pi$ is faithful.
(For instance, take a faithful regular covariant representation of $(A, \alpha)$.)
We identify
$A \rc G$ with a \Cs-subalgebra of $\IB(\fH \otimes L^2(G))$
via the regular covariant representation associated to $\pi$.
Then this gives rise to an inclusion
$p_K (A \rc G)p_K \subset \IB(\fH \otimes L^2(G)^K)$.
We also identify $p_K A p_K$ with the \Cs-algebra $\pi(A^K)\otimes \id_{\IC \chi_K}$ on $\fH \otimes \IC\chi_K$
in the obvious way.
Let $q$ denote the orthogonal projection
from $\fH \otimes L^2(G)^K$ onto $\fH \otimes \IC\chi_K$.
Define \[\widetilde{E} \colon \IB(\fH \otimes L^2(G)^K) \rightarrow \IB(\fH \otimes \IC \chi_K)\] by \[\widetilde{E}(x):= q x q.\]
Then under the above identifications of \Cs-algebras,
we obtain
\[\widetilde{E}(p_K (A \rc G)p_K) = p_K A p_K.\]
Hence the map $\widetilde{E}$ restricts to a conditional expectation
\[E_K \colon p_K (A \rc G)p_K \rightarrow p_K A p_K.\]
Direct computations show that the map $E_K$ satisfies the required equation.
Let $\rho\colon G \curvearrowright L^2(G)$ denote the right regular representation of $G$.
Observe that $(v\otimes \rho)(G)p_K$
commutes with $p_K(A \rc G) p_K$.
As the subset $[(v \otimes \rho)(G)p_K]\cdot (\fH \otimes \IC \chi_K)$
spans a dense subspace of $\fH \otimes L^2(G)^K$,
the conditional expectation $E_K$ is faithful.
\end{proof}
\section{New construction of non-discrete \Cs-simple groups}

Recall that $G$ is a totally disconnected locally compact group.
We will construct an ambient \Cs-simple group $\cG$ of $G$.

To avoid confusion, we first introduce the following notations.
Let $\Upsilon_n$; $n\in \mathbb{N}$, be pairwise distinct copies of the group
\[\bigoplus_{K<G}\bigoplus_{G/K} \mathbb{Z}_2,\]
where the first direct sum is taken over the set of all compact open subgroups $K$ of $G$.
We equip each $\Upsilon_n$ with the $G$-action induced from the
left translation $G$-actions on $G/K$.
Let $\Xi_n$; $n\in \mathbb{N}$, be pairwise distinct copies of the integer group $\mathbb{Z}$.
We equip each $\Xi_n$ with the trivial $G$-action.

Set \[\Gamma_1:= \Upsilon_1,\quad
\Lambda_1:= \Gamma_1 \ast \Xi_1,\]
equipped with the obvious $G$-actions.
Assume that $\Gamma_n$ and $\Lambda_n$ have been defined.
We then define
\[\Gamma_{n+1}:= \Lambda_n \times \Upsilon_{n+1},\quad
\Lambda_{n+1}:=\Gamma_{n+1} \ast \Xi_{n+1},\]
equipped with the obvious $G$-actions.
As a result, we obtain the increasing sequence
\[\Gamma_1< \Lambda _1 < \Gamma_2 < \Lambda_2 < \cdots\]
of discrete groups.
Define $\Lambda$ to be the inductive limit of the above sequence.
As the inclusions are $G$-equivariant,
we have a natural $G$-action $\alpha$ on $\Lambda$.
Now set
\[\cG:= \Lambda \rtimes_\alpha G.\]
Clearly $\cG$ contains an open subgroup isomorphic to $G$.
Put $\cG_n := \Lambda_n \rtimes G< \cG$ for $n\in \IN$.

For an open compact subgroup $K<G~(< \cG$), the following observation on $p_K{\mathrm C}^\ast_{\mathrm r}(\cG)p_K$ is useful.
Note first that the $\ast$-subalgebra $p_K C_c(\cG) p_K$ is dense in $p_K{\mathrm C}^\ast_{\mathrm r}(\cG) p_K$.
Since $K$ is open in $\cG$, the characteristic functions $\chi_S$; $S \in K\backslash \cG/K$,
form a basis of $p_K C_c(\cG) p_K$.
Therefore one can approximate a given element $x\in p_K{\mathrm C}^\ast_{\mathrm r}(\cG) p_K$ arbitrarily well
by an element of the form
\[\sum_{s\in F} p_K x_s \lambda_s p_K,\]
where $n\in \IN$, $F =\{e, s_1, \ldots, s_l\}$ is a finite subset in $G$
having the pairwise disjoint $K$-double cosets,
$x_s \in \rg(\Lambda_n)$ for all $s\in F$, and $x_e \in \rg(\Lambda_n)^K$.

Note that by Theorem 3.18 of \cite{Wes},
the class of elementary totally disconnected locally compact groups is
closed under taking open subgroups and group extensions.
Therefore the group $\cG$ is elementary if and only if the original group $G$ is elementary.
Typical examples of non-elementary totally disconnected locally compact groups
include $\mathrm{PSL}_d(\mathbb{Q}_p)$ and $\mathrm{Aut}(T_d)$,
where $p$ is a prime number, $d\in \{3, 4, \ldots\}$,
and $T_d$ is a $d$-regular tree (see Proposition 6.3 in \cite{Wes}).

The following theorem is the main result of this article.
\begin{Thm}\label{Thm:simple}
The locally compact group $\cG$ is \Cs-simple.
\end{Thm}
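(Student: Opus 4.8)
The plan is to reduce \Cs-simplicity of $\rg(\cG)$ to the simplicity of the corners $p_K\rg(\cG)p_K$ over compact open subgroups $K<G$, and then to prove the latter by a Powers-type averaging tailored to the layered structure of $\Lambda$. For the reduction I would use that $(p_K)_{K<G}$ is an approximate unit of $\rg(\cG)$ consisting of projections with $p_{K'}\ge p_K$ whenever $K'\le K$: granting that every $p_K\rg(\cG)p_K$ is simple (and unital, with unit $p_K$), a nonzero closed ideal $I\trianglelefteq\rg(\cG)$ meets some corner, say $p_{K_0}Ip_{K_0}\ne 0$; then $\overline{p_{K_0}Ip_{K_0}}$ is a nonzero closed ideal of $p_{K_0}\rg(\cG)p_{K_0}$, hence equals it, so $p_{K_0}\in I$; for each compact open $K'\le K_0$ one then has $p_{K_0}\in I\cap p_{K'}\rg(\cG)p_{K'}$, whence $p_{K'}\in I$ by simplicity of $p_{K'}\rg(\cG)p_{K'}$; and as the compact open subgroups contained in $K_0$ still form a neighbourhood basis at $e$, $(p_{K'})_{K'\le K_0}$ is an approximate unit and $I=\rg(\cG)$.

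So the work is to show, for a fixed compact open $K<G$, that every nonzero closed ideal $I\trianglelefteq p_K\rg(\cG)p_K$ contains $p_K$. Identifying $\rg(\cG)=\rg(\Lambda)\rc G$, Lemma~\ref{Lem:ce} supplies a faithful conditional expectation $E_K\colon p_K\rg(\cG)p_K\to\rg(\Lambda)^Kp_K$; via the identification $y\mapsto yp_K$ of $\rg(\Lambda)^K$ with $\rg(\Lambda)^Kp_K$, the canonical (faithful) trace $\tau$ on $\rg(\Lambda)$ restricts to a faithful trace on $\rg(\Lambda)^K$. Given $0\ne a\in I$ with $a\ge 0$, write $E_K(a)=zp_K$ with $0\ne z\ge 0$ in $\rg(\Lambda)^K$, so $\tau(z)>0$, and approximate $a$ in norm by $b:=\sum_{s\in F}p_Kx_s\lambda_sp_K$ of the form recalled before the statement ($F\ni e$ with pairwise disjoint $K$-double cosets, $x_s\in\rg(\Lambda_n)$, $x_e\in\rg(\Lambda_n)^K$). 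Since $E_K(b)=x_ep_K$ and $E_K$ is contractive, a sufficiently close approximation makes $\tau(x_e)$ as close to $\tau(z)$ as desired, in particular nonzero. The aim is then to average $b$ over unitary conjugations down to $\tau(x_e)p_K$.

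The averaging would proceed in two steps, using the two kinds of layers in $\Lambda$. First, pick $m>n$ and let $v\in\Upsilon_m$ be the order-two generator indexed by the coset $eK$ in the $K$-summand; it is $K$-fixed and commutes with $\rg(\Lambda_n)$, so $u_v:=\lambda_vp_K$ is a self-adjoint unitary in $p_K\rg(\cG)p_K$ and
\[u_vbu_v^*=p_Kx_ep_K+\sum_{s\in F\setminus\{e\}}p_K(x_sw_s)\lambda_sp_K,\qquad w_s:=v\cdot\alpha_s(v)\in\Upsilon_m .\]
Here $w_s\ne e$ because $s\notin K$ (disjoint double cosets), and $x_sw_s$ has zero coefficient at $e$, i.e. $\tau(x_sw_s)=0$, because $m>n$ puts $w_s$ outside $\Lambda_n$. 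Second, pick $m'>m$ and a generator $t\in\Xi_{m'}$; it is $G$-fixed, so $u_t:=\lambda_tp_K$ is a unitary in the corner, and $\rg(\Lambda_{m'})=\rg(\Gamma_{m'})\ast\rg(\Xi_{m'})$ is a reduced free product containing $x_e$ and all $x_sw_s$ in the factor $\rg(\Gamma_{m'})$. Using the Powers--Haagerup averaging estimate in this free product simultaneously for the finitely many elements $x_e$, $x_sw_s$, for every $\varepsilon>0$ there are exponents $k_1<\cdots<k_N$ with
\[\Bigl\|\tfrac1N\sum_{i=1}^Nu_t^{k_i}(u_vbu_v^*)u_t^{-k_i}-\tau(x_e)p_K\Bigr\|<\varepsilon ,\]
since the $e$-term tends to $\tau(x_e)p_K$ and each $s$-term with $s\ne e$ tends to $\tau(x_sw_s)p_K\lambda_sp_K=0$. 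Each summand on the left is a unitary conjugate of $b$, hence within $\|a-b\|$ of a unitary conjugate of $a$, which lies in $I$; since $I$ is closed and convex, $\tau(x_e)p_K$ lies within $\varepsilon+\|a-b\|$ of $I$. Letting the approximation and $\varepsilon$ improve yields $\tau(z)p_K\in I$, and $\tau(z)\ne 0$ gives $p_K\in I$.

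I expect the main obstacle to be the last step: the operator-norm Powers--Haagerup averaging in the reduced free product $\rg(\Gamma_{m'})\ast\rg(\Xi_{m'})$. This is exactly the role of the free factors $\Xi_n$ — averaging over the abelian layers $\Upsilon_n$ alone could never collapse an off-diagonal term in norm — and it relies on the usual asymptotic-freeness/Haagerup-inequality input, with the $k_i$ chosen with sufficiently large gaps. A secondary, organizational difficulty is to keep the telescoping $\Gamma_1<\Lambda_1<\Gamma_2<\cdots$ straight so that, for any approximation lying in $\rg(\Lambda_n)$, one can always produce a fresh copy $\Upsilon_m$ ($m>n$) that commutes with it and a fresh free factor $\Xi_{m'}$ ($m'>m$) that is free from $\rg(\Gamma_{m'})\supseteq\rg(\Lambda_n)$, together with the two elementary verifications $w_s\ne e$ and $\tau(x_sw_s)=0$ on which everything hinges.
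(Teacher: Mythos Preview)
Your proposal is correct and takes a genuinely different route from the paper.

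After the same preliminary reduction and approximation $a_0=\sum_{s\in F}p_Kx_s\lambda_sp_K$ (with $\|a-a_0\|<1/2$ and $\|x_e\|=1$), the paper does \emph{not} perform a Powers averaging. Instead it chooses a $K$-invariant clopen set $U\subset\{0,1\}^{G/K}$ with $\alpha_{s_i}(U)\cap U=\emptyset$ for each $s_i\neq e$, so that the projection $p=\chi_U\in\rg(\Upsilon_{n+1,K})$ satisfies $p\lambda_{s_i}p=0$ and hence $pa_0p=px_ep_K$ --- the off-diagonal terms are annihilated outright rather than merely made trace-zero. Since $p$ and $x_e$ sit in independent tensor factors of $\rg(\Gamma_{n+1})=\rg(\Lambda_n)\otimes\rg(\Upsilon_{n+1})$, one has $\|px_e\|=1$. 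The paper then appeals to Dykema's simplicity theorem for the \Cs-subalgebra $B\subset\rg(\Lambda)^K$ generated by $\rg(\Gamma_{n+1})^K$ and $\rg(\Xi_{n+1})$, together with a quantitative lemma of Zacharias, to produce $b_1,\dots,b_r\in B$ with $\|\sum b_ib_i^*\|\le2$ and $\sum b_i(px_e)b_i^*=1_B$; this yields $\sum b_ipa_0pb_i^*=p_K$ exactly, and the error term has norm $<1$, forcing $p_K\in I$ in one stroke.

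Your argument instead \emph{twists} the off-diagonal coefficients into trace-zero elements via conjugation by a group element of a higher $\Upsilon$-layer, and then runs a Powers/de~la~Harpe--Skandalis ping-pong over a fresh free $\IZ$-factor to collapse everything to $\tau(x_e)p_K$. This is closer in spirit to the classical discrete-group \Cs-simplicity proofs and avoids the black-box invocations of Dykema and Zacharias; in fact it is essentially the averaging technique the paper itself deploys later, in the proof of the KMS-weight uniqueness theorem (where $\Xi_{n+1}\ast\Xi_{n+2}$ is used). In exchange, the paper's route is more economical with the inductive layers (only level $n+1$ is touched), kills the off-diagonal terms exactly rather than asymptotically, and produces the clean one-shot estimate $\|p_K+I\|<1$ rather than a double limit over approximations and averaging lengths.
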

\begin{proof}
Let $I$ be a nonzero (closed two-sided) ideal of $\rg(\cG)= \rg(\Lambda) \rca{\alpha} G$.
Take a nonzero positive element $x\in I$.
Let $K<G$ be a compact open subgroup satisfying $a:=p_K x p_K \neq 0$.
We will show that $p_K \in I$.

Let \[E_K\colon p_K \rg(\cG) p_K \rightarrow \rg(\Lambda)^K p_K\]
be the faithful conditional expectation provided in Lemma \ref{Lem:ce}.
Since $a$ is positive and nonzero, so is $E_K(a)$.
By rescaling $a$ if necessary, we may further assume that
\[\|E_K(a)\|=1.\]
Choose an $n\in \mathbb{N}$ and $a_0\in p_K C_c(\cG_n)p_K$ satisfying
\[\|a - a_0\| <1/2,\quad
\|E_K(a_0)\|=1.\]
Write \[a_0= \sum_{s\in F} p_K x_s \lambda_s p_K;~ x_s\in \rg(\Lambda_n),~ x_e \in \rg(\Lambda_n)^K,\]
where $F=\{e, s_1, \ldots, s_l\}$ is a finite subset of $G$ having
the pairwise distinct $K$-double cosets.
Note that $\| x_e\| = \|E_K(a_0)\|=1$.

Let
$\Upsilon_{n+1, K} < \Upsilon_{n+1}$
be the $K$-th direct summand of $\Upsilon_{n+1}$.
Let $\rg(\Upsilon_{n+1, K}) \cong C(\{0, 1\}^ {G/ K})$ be the obvious $G$-equivariant $\ast$-isomorphism.
Put
\[U:= \left\{(\epsilon_{gK})_{gK\in G/K} \in \{0, 1\}^ {G/ K}:
\epsilon_{K}= 0,~
 \epsilon_{gK}=1 {\rm~for~}gK \subset \bigsqcup_{i=1}^l K s_i K\right\}.\]
Observe that $U$ is a $K$-invariant (nonempty) clopen subset of $\{0, 1\}^ {G/ K}$.
Moreover for each $i$, we have $\alpha_{s_i}(U) \cap U=\emptyset$.
We regard $p:= \chi_U$ as an element of $\rg(\Lambda)$.
Then $p\lambda_{s_i} p=0$ for $i=1, \ldots, l$.
The projection $p$ is nonzero and commutes with $p_K$ and $x_s; s\in F$.
Thus
\[p a_0 p= p x_e p_K.\]
Since $x_e$ and $p$ sit in the first and second tensor product components of $\rg(\Gamma_{n+1})=\rg(\Lambda_n) \otimes \rg(\Upsilon_{n+1})$ respectively, we have
\[\|px_e\| =\|p\| \|x_e\|=1.\]

Let $B$ be the \Cs-subalgebra of $\rg(\Lambda)^K$
generated by $\rg(\Gamma_{n+1})^K$ and $\rg(\Xi_{n+1})$.
By Theorem 2 of \cite{Dy},
$B$ is simple.
Note that $p x_e \in B$.
Therefore, by Lemma 2.3 in \cite{Zac}, one has a sequence $b_1, \ldots, b_r \in B$
satisfying \[\|\sum_{i=1}^r b_i b_i ^\ast \| \leq 2,\quad \sum_{i=1}^r b_i px_e b_i^\ast =1_B.\]
This implies
\[\sum_{i=1}^r b_i p a_0 p b_i^\ast =\sum_{i=1}^r b_i p x_e b_i^\ast p_K=p_K.\]
Since $\|\sum_{i=1}^r b_i p (a- a_0) p b_i^\ast \| \leq 2 \|a- a_0\| <1$, 
we have $\|p_K+ I\|_{\rg(\cG)/I} <1$.
As $p_K$ is a projection, this yields
$p_K \in I$.
Since $K<G$ can be chosen arbitrarily small, we conclude $I=\rg(\cG)$.
\end{proof}

We keep the settings $G, \cG$, and so on until the end of this article.
\section{Uniqueness of KMS weight on $\rg(\cG)$}
By modifying the proof of Theorem \ref{Thm:simple},
we also obtain the uniqueness of KMS weight on $\rg(\cG)$ with respect to the modular flow.
From now on, we freely use the basic facts on the Plancherel weight observed in Section 2.6 of \cite{Rau}.

For a locally compact group $H$,
let $\Delta_H \colon H \rightarrow \IR_{>0}$ denote the modular function of $H$.
Put $H_0 := \ker(\Delta_H)<H$.
Note that for totally disconnected $H$,
it is not hard to see that $H_0$ is open in $H$
and that $\Delta_H(H) \subset \IQ$.
Observe that for our $G$ and $\cG$,
\[\Delta_{G}(G)= \Delta_{\cG}(\cG), \quad \cG_0= \Lambda \rtimes G_0.\]
Let $\varphi$ denote the Plancherel weight on $\rg(\cG)$.
Let $\sigma^\varphi$ be the modular flow on $\rg(\cG)$:
\[[\sigma^\varphi_t(f)](s):= \Delta_{\cG}(s)^{{\mathrm i}t} f(s)\quad{\rm~for~}f\in C_c(\cG),~s\in \cG,~ t\in \IR.\]
Throughout the paper, a weight on a \Cs-algebra
is always assumed to be \emph{densely defined}, \emph{lower semicontinuous}, and nonzero (i.e., \emph{proper}) without stated.
(See Section 2.6 in \cite{Rau} or \cite{KV} for the definitions.)

For a weight $\psi$ on a \Cs-algebra $A$, as in Definition 1.1 of \cite{KV}, denote by $\cM_{\psi}$
the linear span of $\psi^{-1}([0, \infty))$. Note that $\cM_{\psi}$ is a hereditary $\ast$-subalgebra of $A$.
In addition, when $\psi$ is tracial, $\cM_\psi$ is a norm dense ideal of $A$
hence it contains all projections in $A$.
We say the $\ast$-subalgebra
\[\left\{ a\in A: \begin{array}{l}\cM_\psi a \cup a \cM_\psi \subset \cM_{\psi}, \\
\psi(a x) = \psi(xa) {\rm~for~all~} x\in \cM_\psi\end{array}\right\} \subset A\]
the \emph{centralizer} of $\psi$.
We set
\[C_{cc}(\cG):= \bigcup_{K<G} p_K C_c(\cG) p_K.\]
Here the union is taken over all compact open subgroups $K<G$.
Note that $C_{cc}(\cG)$ is a $\ast$-subalgebra of $C_c(\cG)$.
Observe that for any $\sigma^\varphi$-KMS weight $\psi$ on $\rg(\cG)$,
as every $p_K$ is a projection fixed by $\sigma^\varphi$,
we have $C_{cc}(\cG) \subset \cM_\psi$.
Indeed, as $p_K$ is a projection,
one has an analytic element $a\in \cM_\psi$
with $p_K\leq p_K a^\ast a p_K$.
Then, by the KMS condition, we have
\[\psi(p_K) \leq \psi(p_K a^\ast a p_K)= \psi( \sigma^\varphi_{{\mathrm i}/2}(a) p_K\sigma^\varphi_{{\mathrm i}/2}(a^\ast)) \leq \psi( \sigma^\varphi_{{\mathrm i}/2}(a) \sigma^\varphi_{{\mathrm i}/2}(a^\ast))= \psi(a^\ast a).\]

\begin{Thm}\label{Thm:trace}
Up to scalar multiple, the Plancherel weight $\varphi$ is the only
$\sigma^\varphi$-KMS weight on $\rg(\cG)$.
When $\cG$ is non-unimodular, there is no tracial weight on $\rg(\cG)$.
\end{Thm}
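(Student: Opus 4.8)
The plan is to show that an arbitrary $\sigma^\varphi$-KMS weight $\psi$ on $\rg(\cG)$ is proportional to $\varphi$, by first pinning $\psi$ down on the corners $p_K\rg(\cG)p_K$, $K<G$ a compact open subgroup, and then passing from the corners to all of $\rg(\cG)$. Three remarks make the reduction. First, a $\sigma^\varphi$-KMS weight is $\sigma^\varphi$-invariant and $\sigma^\varphi_t(p_Kx_s\lambda_sp_K)=\Delta_{\cG}(s)^{{\mathrm i}t}p_Kx_s\lambda_sp_K$, so both $\psi$ and $\varphi$ annihilate $p_Kx_s\lambda_sp_K$ whenever $\Delta_{\cG}(s)\neq1$. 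Second, every compact open subgroup of $G$ lies in $G_0$ (a compact subgroup of $\IR_{>0}$ is trivial), so $p_K\in\rg(\cG_0)$; as $\Delta_{\cG}$ is trivial on $\cG_0$, the flow $\sigma^\varphi$ fixes $\rg(\cG_0)$ pointwise, hence $\rg(\cG_0)$ lies in the centralizer of $\psi$, and, since $p_K\in\cM_\psi$, $\psi$ restricts to a bounded tracial functional on $p_K\rg(\cG_0)p_K$. Third, $(p_K)_K$ is an approximate unit of $\rg(\cG)$ which increases as $K$ shrinks and sits inside the centralizer of $\psi$; using the trace property (which makes $K\mapsto\psi(p_Kxp_K)$ monotone) and lower semicontinuity of $\psi$, one gets $\psi(x)=\lim_K\psi(p_Kxp_K)$ for $x\in\rg(\cG)_+$. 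It therefore suffices to prove that, for every compact open $K<G$,
\[
\psi|_{p_K\rg(\cG_0)p_K}=\frac{\psi(p_K)}{\varphi(p_K)}\,\varphi|_{p_K\rg(\cG_0)p_K},
\]
and that the constant $c:=\psi(p_K)/\varphi(p_K)$ does not depend on $K$.

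For the displayed identity we modify the proof of Theorem \ref{Thm:simple}, which applies verbatim (with $G_0$ in place of $G$) inside $\rg(\cG_0)=\rg(\Lambda)\rca{\alpha}G_0$ since $\cG_0=\Lambda\rtimes G_0$ is of exactly the same form as $\cG$. Given a positive $a\in p_KC_c(\Lambda_n\rtimes G_0)p_K$, write $a=\sum_{s\in F}p_Kx_s\lambda_sp_K$ with $F\subset G_0$ of pairwise disjoint $K$-double cosets and $x_e=E_K(a)$ the coefficient of $\lambda_e$; the projection $p=\chi_U\in\rg(\Upsilon_{n+1,K})^K$ of that proof commutes with $p_K$ and all $x_s$ and annihilates the terms with $s\neq e$, so $pap=px_ep_K$ and $px_e\in B$. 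At this point the amplification of Lemma 2.3 of \cite{Zac} is of no use: it controls only $\sum_ib_ib_i^\ast$ — the wrong side for estimating $\psi(\sum_ib_izb_i^\ast)=\psi(z\sum_ib_i^\ast b_i)$ — and in fact no finite family can bound both $\sum_ib_ib_i^\ast$ and $\sum_ib_i^\ast b_i$ while amplifying a positive element of small trace. Instead one uses that $B$ is, by \cite{Dy}, not merely simple but a reduced free product $\rg(\Gamma_{n+1})^K\ast\rg(\Xi_{n+1})$ with a unique tracial state, and combines this with the Powers-type estimates already underlying the proof of Theorem \ref{Thm:simple} (coming from the free Haar unitaries of the $\Xi$-factors and from the Bernoulli shifts $G_0\curvearrowright\{0,1\}^{G/K}$ in the $\Upsilon$-factors): this upgrades the amplification to a Dixmier-type averaging — a finite family of unitaries lying in $\rg(\Lambda)^K\subset\rg(\cG_0)$, hence in the centralizer of $\psi$, whose averaged conjugation carries $a$ in operator norm to $\varphi(p_K)^{-1}\varphi(a)\,p_K$. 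Since conjugation by centralizer unitaries preserves $\psi$ and $\psi$ is norm-continuous on the bounded corner $p_K\rg(\cG_0)p_K$, this forces $\psi(a)=\varphi(p_K)^{-1}\varphi(a)\,\psi(p_K)$, first for such $a$ and then, by norm-density, for all positive $a$ in the corner, which is the displayed identity.

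The constant $c$ is independent of $K$: for compact open $K'\subseteq K$ we have $p_K\in p_{K'}\rg(\cG_0)p_{K'}$, so the identity for $K'$, evaluated at $p_K$, gives $\psi(p_K)/\varphi(p_K)=\psi(p_{K'})/\varphi(p_{K'})$; since any two compact open subgroups of $G$ admit a common one, $c$ is well defined, and $c>0$ as $\psi\neq0$. Now $p_K\rg(\cG)p_K$ is densely spanned by the elements $p_Kx_s\lambda_sp_K$; those with $\Delta_{\cG}(s)\neq1$ are killed by both $\psi$ and $\varphi$, while those with $\Delta_{\cG}(s)=1$ lie in $p_K\rg(\cG_0)p_K$, so $\psi=c\varphi$ on $p_K\rg(\cG)p_K$ for every $K$; hence $\psi=c\varphi$ by the corner limit applied to both weights. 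Finally, a tracial weight $\psi$ on $\rg(\cG)$ likewise has $\rg(\cG_0)$ in its centralizer, so the arguments above give $\psi(p_L)=c\,\varphi(p_L)$ for every compact open $L<G$, with $c>0$. If $\cG$ is non-unimodular, choose $h\in G$ with $\Delta_{\cG}(h)=\Delta_G(h)\neq1$; then $\psi(p_{hKh^{-1}})=\psi(\lambda_hp_K\lambda_h^\ast)=\psi(p_K)$ by traciality, while $\varphi(p_{hKh^{-1}})=\Delta_{\cG}(h)\varphi(p_K)\neq\varphi(p_K)$, which is impossible. Hence $\rg(\cG)$ carries no tracial weight in that case.

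The main obstacle is the second paragraph: converting the ideal-theoretic amplification of Theorem \ref{Thm:simple} into control of the \emph{value} of a KMS weight. Lemma 2.3 of \cite{Zac} is genuinely insufficient, so one must extract a Dixmier/Powers-style \emph{unitary} averaging from the same combinatorial data — Dykema's uniqueness of trace for the free products $B$, the free Haar unitaries of the $\Xi$-factors, and the translation action of $G_0$ on the $\Upsilon$-Bernoulli shifts — and verify throughout that every averaging unitary, the projection $p$, and all off-diagonal corrections lie in the centralizer of $\psi$ and that the relevant products lie in $\cM_\psi$. The reduction to corners in the first paragraph, by contrast, is routine once one observes that $(p_K)$ is an increasing approximate unit inside the centralizer.
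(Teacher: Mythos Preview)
Your reductions in the first and third paragraphs are sound, and your endgame (independence of the constant, the tracial contradiction via conjugation) is fine. The gap is exactly where you flag it: the Dixmier-type unitary averaging in the second paragraph is asserted, not proved. Citing ``Dykema's uniqueness of trace'', ``free Haar unitaries of the $\Xi$-factors'', and ``Bernoulli shifts in the $\Upsilon$-factors'' does not by itself produce unitaries $u_1,\ldots,u_r\in\rg(\Lambda)^K$ whose average carries a generic $a\in p_K C_c(\cG_0)p_K$ to $\varphi(p_K)^{-1}\varphi(a)\,p_K$ in norm. The delicate point is killing the $G_0$-part: for $s\in G_0\setminus K$, the $\Xi$-unitaries commute with $\lambda_s$ and hence do nothing, while naive averaging with $K$-fixed $\Upsilon$-unitaries replaces $\lambda_s$ by $q\lambda_s$ for a \emph{projection} $q$, whose norm is still $1$. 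One can make your scheme work (conjugate a $K$-fixed $\Upsilon$-element $\gamma$ with $\gamma\neq\alpha_s(\gamma)$ by powers of a $\Xi$-generator to obtain free order-$2$ elements $w^k\gamma\alpha_s(\gamma)^{-1}w^{-k}$, whose $\lambda$'s average to $0$ by Kesten-type bounds), but this two-stage construction --- first Powers-average the $\Lambda_n$-coefficients, then kill the $\lambda_s$-terms with these conjugated $\Upsilon$-unitaries, checking the steps do not interfere --- is real work that your sketch omits.

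The paper avoids this altogether. It does not attempt a Dixmier property on the corners; instead it proves a pointwise vanishing $\psi(\lambda_s p_K)=0$ for every $s\in\cG\setminus K$ and every weight $\psi$ whose centralizer contains $C_{cc}(\cG_0)$. The mechanism is: take the projection $p\in\IC[\Gamma_1]^K$ from Theorem~\ref{Thm:simple} satisfying $p\lambda_u p=0$ when $u\notin K$ (and $p=1$ otherwise), run a Powers average with $t_1,\ldots,t_r\in\Xi_{n+1}\ast\Xi_{n+2}$ so that $\frac{1}{r}\sum\lambda_{t_i}x\lambda_{t_i}^\ast$ is $\varepsilon$-close to $\tau(x)$ for $x=p$ and $x=\lambda_g$ simultaneously, and then compute
\[
\frac{1}{r^2}\sum_{i,j}\lambda_{t_i}p\lambda_{t_i}^\ast\,\lambda_{t_j}p_K\lambda_s p_K\lambda_{t_j}^\ast\,\lambda_{t_i}p\lambda_{t_i}^\ast
\]
directly, using that $\Xi$-elements commute with $G$; this quantity has norm $<\varepsilon$, and unwinding via the centralizer property of the $t_i$ and $p$ yields $|\tau(p)\psi(\lambda_s p_K)|\le 2\psi(p_K)\varepsilon$. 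From the vanishing, $\psi(p_K)\mu(K)$ is shown to be independent of $K$, and Lemma~2.23 of \cite{Rau} then identifies $\psi$ with a multiple of $\varphi$. So the paper trades your (unproved) corner Dixmier property for a single explicit double-average estimate on the basis elements $\lambda_s p_K$, plus a black-box citation for the reconstruction.
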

\begin{proof}
We consider the following claim.
\begin{claim}
Let $\psi$ be a weight on $\rg(\cG)$ whose centralizer contains $C_{cc}(\cG_0)$
and satisfies $C_{cc}(\cG) \subset \cM_\psi$.
Then for any compact open subgroup $K<G$ and any $s\in \cG \setminus K$, we have
\[\psi(\lambda_s p_K)=0.\]
\end{claim}
Note that by the above observations, any $\sigma^\varphi$-KMS weights and tracial weights on $\rg(\cG)$ satisfy the assumption of the Claim.

We first prove the theorem under the assumption that the Claim holds true.
In the case that $\psi$ is a $\sigma^\varphi$-KMS weight, we will show 
 that $\psi$ is a scalar multiple of $\varphi$.
Take any two compact open subgroups $K_1, K_2 < G$.
Put $K:=K_1 \cap K_2$
and take $s_1, \ldots, s_l, t_1, \ldots, t_r \in G$
satisfying $K_1= \bigsqcup_{i=1}^l s_i K$, $K_2 = \bigsqcup_{i=1}^r t_i K$.
Then
 \[p_{K_1}= \frac{1}{l}\sum_{i=1}^l \lambda_{s_i} p_K,\quad
p_{K_2} = \frac{1}{r}\sum_{i=1}^r \lambda_{t_i} p_K.\]
Since $r \mu(K_1)=l \mu(K_2)$, the hypothesis implies
\[C:=\psi(p_{K_1}) \mu(K_1)= \psi(p_{K_2}) \mu(K_2).\]
By Lemma 2.23 of \cite{Rau} (see also \cite{KV}), 
we obtain $\psi = C\varphi$.
Next consider the case that $\cG$ is non-unimodular and that $\psi$ is a tracial weight.
In this case,
the equality in the Claim implies that the weight $\psi$ vanishes on $C_{cc}(\cG)$.
Since the projections $p_K$, where $K<G$ are compact open subgroups,
form an approximate unit of $\rg(\cG)$,
it follows from the tracial condition and lower semicontinuity of $\psi$
that $\psi=0$.
This proves the statement of the theorem. Hence it suffices to show the Claim.

We now prove the Claim.
Let $\psi$, $K<G$, $s\in \cG \setminus K$ be as in the Claim.
Write $s= g u$; $g\in \Lambda$, $u\in G$.
To show the claimed equation $\psi(\lambda_s p_K)=0$,
we first recall from the proof of Theorem \ref{Thm:simple}
that when $u \not\in K$, one has a nonzero projection $p \in \rg(\Gamma_1)^K$
satisfying $p \lambda_u p=0$.
In fact $p$ is taken from the group algebra $\IC[\Gamma_1]$.
When $u\in K$, put $p:= \lambda_e \in \IC[\Gamma_1]$.
Choose $n\in \IN$ satisfying $g\in \Lambda_n$.
Consider the subgroup $\Sigma:= \Lambda_n \ast \Xi_{n+1} \ast \Xi_{n+2} < \Lambda$.
Denote by $\tau$ the canonical tracial state on $\rg(\Sigma)$.
As observed in Lemma 3.8 of \cite{Suzmin}, thanks to Lemma 5 of \cite{HS}, one can proceed the \emph{Powers averaging argument} \cite{Pow} for $\Sigma$ 
by using only elements in $\Xi_{n+1} \ast \Xi_{n+2}$.
This implies, for any $\varepsilon>0$, one has $t_1, \ldots, t_r \in \Xi_{n+1} \ast \Xi_{n+2}$
satisfying
\[\|\frac{1}{r}\sum_{i=1}^r \lambda_{t_i} x \lambda_{t_i}^\ast - \tau(x)\|< \varepsilon \quad{\rm~for~}x= p, \lambda_g.\]
Then, as $\Xi_{n+1} \ast \Xi_{n+2}$ commutes with $G$, we have
\begin{align*}\|\frac{1}{r^2} \sum_{i=1}^r \sum_{j=1}^r \lambda_{t_i} p \lambda_{t_i}^\ast \lambda_{t_j} p_K \lambda_s p_K \lambda_{t_j}^\ast\lambda_{t_i} p \lambda_{t_i}^\ast\| 
&=\|\frac{1}{r}\sum_{i=1}^r\left[\lambda_{t_i} p \lambda_{t_i}^\ast p_K\left (\frac{1}{r}\sum_{j=1}^r \lambda_{t_j} 
\lambda_g \lambda_{t_j}^\ast\right) \lambda_u p_K \lambda_{t_i} p \lambda_{t_i}^\ast\right]\|\\
&< \varepsilon + \frac{\tau(\lambda_g)}{r}\|\sum_{i=1}^r\lambda_{t_i} p_K p \lambda_u p p_K \lambda_{t_i}^\ast\| \\
&= \varepsilon.\end{align*}
Here the last equation holds true because the condition $u\in K$ implies $g \neq e$.
Since $p$ is a $K$-invariant projection and $\Sigma, K \subset \cG_0$, the previous inequality yields
\begin{align*}|\psi(\lambda_s p_K \sum_{i=1}^r \sum_{j=1}^r \lambda_{t_j}^\ast \lambda_{t_i} p \lambda_{t_i}^\ast \lambda_{t_j} )|
&= |\psi( \sum_{i=1}^r \sum_{j=1}^r\lambda_s p_K (\lambda_{t_j}^\ast \lambda_{t_i} p \lambda_{t_i}^\ast)(\lambda_{t_i} p \lambda_{t_i}^\ast \lambda_{t_j}p_K))|\\
&= |\psi( \sum_{i=1}^r \sum_{j=1}^r \lambda_{t_i} p \lambda_{t_i}^\ast \lambda_{t_j} p_K \lambda_s p_K \lambda_{t_j}^\ast\lambda_{t_i} p \lambda_{t_i}^\ast)|\\
&\leq r^2 \psi(p_K) \varepsilon.
\end{align*}
This yields

\begin{align*}|\tau(p)\psi(\lambda_s p_K)| &\leq |\psi(p_K \lambda_s p_K (\tau(p)- \frac{1}{r^2} \sum_{i=1}^r \sum_{j=1}^r \lambda_{t_j}^\ast \lambda_{t_i} p \lambda_{t_i}^\ast \lambda_{t_j}))| + \psi(p_K)\varepsilon \\ &\leq 2\psi(p_K) \varepsilon.
\end{align*}
Since $\varepsilon>0$ was chosen arbitrarily small (independent on $p$),
we conclude
\[\psi(\lambda_ s p_K)=0.\]
\end{proof}

\section{On Factoriality and types of group von Neumann algebras $L(\cG)$}
In this section we observe the factoriality of the group von Neumann algebra $L(\cG)$.
We then determine its Murray--von Neumann--Connes type.
For the definition of Connes's $S$-invariant,
we refer the reader to Section III of \cite{Con}.

\begin{Thm}
The von Neumann algebra $L(\cG)$ is a non-amenable factor of type
\[
 \begin{cases}
 {\rm I\hspace{-.01em}I}_1 &{\rm~when~}G{\rm~is~discrete},\\
 {\rm I\hspace{-.01em}I}_\infty &{\rm~when~}G{\rm~is~non}\mathchar`-{\rm discrete~and~unimodular},\\

 {\rm I\hspace{-.01em}I\hspace{-.01em}I} &{\rm~ otherwise},
 \end{cases}
\]
whose Connes's $S$-invariant is
the closure of $\Delta_G(G)$ in $\IR_{\geq 0}$.
\end{Thm}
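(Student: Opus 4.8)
The plan is to deduce the factoriality and type of $L(\cG)$ from the uniqueness of KMS weight established in Theorem \ref{Thm:trace}, together with standard modular theory. First I would recall that $L(\cG)$ is the von Neumann algebra generated by $\rg(\cG)$ in the GNS representation of the Plancherel weight $\varphi$; its modular automorphism group is the (normal extension of the) flow $\sigma^\varphi$ given by $\Delta_{\cG}^{{\mathrm i}t}$. To prove factoriality, note that the center $\cZ(L(\cG))$ is fixed by $\sigma^\varphi$ (the modular flow is inner modulo the center in a very strong sense: it fixes the center pointwise). So I would take a central projection $z\in \cZ(L(\cG))$ and argue that the restriction of $\varphi$ to $z L(\cG) z$ gives, via a suitable compression, a $\sigma^\varphi$-KMS weight on $\rg(\cG)$ that is dominated by $\varphi$; more concretely, the weight $a\mapsto \varphi(za)$ on the hereditary subalgebra $\overline{z\rg(\cG)z}$ extends to a KMS weight, and the uniqueness forces $z\in\{0,1\}$. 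Here the von Neumann algebra version of Lemma \ref{Lem:ce} and the normality of the expectation are what let me transfer the $\Cs$-level uniqueness to the $\Wso$-level. The main obstacle I anticipate is making this compression/extension argument airtight: one must check that the candidate weight on the corner is again densely defined, lower semicontinuous and satisfies the KMS condition for the \emph{same} flow, which requires that $z$ be analytic (it is, being fixed by $\sigma^\varphi$) and that the relevant density holds.

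Granting factoriality, the type is then read off from the range of the modular flow and from the structure of $\Delta_{\cG}$. When $G$ is discrete, $\cG=\Lambda\rtimes G$ is discrete, $\varphi$ is a finite trace, and $L(\cG)$ is a $\mathrm{II}_1$ factor — it is infinite-dimensional because $\cG$ has infinite conjugacy classes (the groups $\Upsilon_n,\Xi_n$ are infinite and the free/product construction makes $\cG$ an ICC group), so it cannot be type $\mathrm{I}$. When $G$ is non-discrete but unimodular, $\Delta_{\cG}$ is trivial (using $\Delta_{\cG}(\cG)=\Delta_G(G)=\{1\}$), so $\sigma^\varphi$ is trivial and $\varphi$ is a faithful normal semifinite trace; the factor is semifinite, and it is \emph{not} finite because $\varphi(1)=\varphi(\mathbf 1_{\cG})=\infty$ — indeed $p_K$ has finite trace $\mu(K)$ while $1$ is a limit of the increasing net $(p_K)$ with $\mu(K)\to\infty$ as $K$ shrinks — and not type $\mathrm{I}$ by the ICC-type argument again; hence $\mathrm{II}_\infty$. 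When $G$ is non-unimodular, $\Delta_{\cG}$ is nontrivial, so $\sigma^\varphi$ is a nontrivial flow and, by Theorem \ref{Thm:trace}, there is no trace at all, so the factor is of type $\mathrm{III}$.

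For the $S$-invariant, I would use Connes's description $S(L(\cG))=\bigcap_{\varphi} \mathrm{Sp}(\Delta_{\varphi})$, the intersection of the spectra of the modular operators over all faithful normal semifinite weights. By the uniqueness theorem, every such weight is a scalar multiple of $\varphi$, so $S(L(\cG))=\mathrm{Sp}(\Delta_{\varphi})\cap\IR_{\geq 0}$. The modular operator $\Delta_{\varphi}$ for the Plancherel weight of $\cG$ acts on $L^2(\cG)$ by multiplication by the modular function $\Delta_{\cG}$ (this is the standard computation for the Plancherel weight, cited from Section 2.6 of \cite{Rau}), so its spectrum is the closure of the range $\Delta_{\cG}(\cG)=\Delta_G(G)$ in $\IR_{\geq 0}$ (including $0$ when the group is non-unimodular, since $0$ always lies in $\mathrm{Sp}(\Delta)$ for a non-semifinite-tracial weight — or more simply because the spectrum of a positive unbounded multiplication operator is closed and the construction forces $0$ into the closure). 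In the discrete and unimodular cases this recovers $\{1\}$, consistent with types $\mathrm{II}_1$ and $\mathrm{II}_\infty$; in the non-unimodular case, since $\Delta_G(G)\subset\IQ_{>0}$ is a nontrivial subgroup of $(\IR_{>0},\times)$, its closure is either a discrete group $\lambda^{\IZ}\cup\{0\}$ (type $\mathrm{III}_\lambda$, $0<\lambda<1$) or all of $\IR_{\geq 0}$ (type $\mathrm{III}_1$), matching the claimed formula. Finally, non-amenability follows because $\cG$ contains the discrete free subgroup $\Xi_1\ast\Xi_2\cong\IF_2$ (from the inductive construction), so $L(\IF_2)\subset L(\cG)$ and $L(\cG)$ cannot be hyperfinite/amenable; the main work here is just assembling these standard facts, so I expect no serious obstacle beyond the factoriality argument already noted.
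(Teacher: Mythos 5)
Your factoriality argument for $L(\cG)$ itself (cutting by a central projection $z$, checking that $\varphi(z\,\cdot)$ is again a proper $\sigma^\varphi$-KMS weight on $\rg(\cG)$, and invoking Theorem \ref{Thm:trace}) is essentially sound and is a legitimate variant of the paper's route, which instead reduces to the centralizer of the Plancherel weight, namely $L(\cG_0)$, and uses uniqueness of tracial weights on $\rg(\cG_0)$. The later steps, however, contain genuine gaps. First, the $S$-invariant: your assertion that ``every faithful normal semifinite weight on $L(\cG)$ is a scalar multiple of $\varphi$'' is false. Theorem \ref{Thm:trace} only concerns KMS weights for the \emph{fixed} flow $\sigma^\varphi$ on the C$^*$-algebra; a general faithful normal semifinite weight is KMS for its \emph{own} modular group (e.g.\ $\varphi(h^{1/2}\cdot h^{1/2})$ for a non-scalar positive invertible $h$ in the centralizer), so you cannot identify $\bigcap_\psi \mathrm{Sp}(\Delta_\psi)$ with $\mathrm{Sp}(\Delta_\varphi)$ this way. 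The correct tool, and what the paper invokes via \cite{Con} and Theorem 2.27 of \cite{Rau}, is Connes's criterion: $S(M)=\mathrm{Sp}(\Delta_\varphi)$ provided the \emph{centralizer} of $\varphi$ is a factor. Here the centralizer is $L(\cG_0)$, so one must also prove factoriality of $L(\cG_0)$ --- the paper does this using uniqueness of tracial weights on $\rg(\cG_0)$, and your proposal never establishes it. This criterion also yields the type III conclusion in the non-unimodular case; your alternative deduction ``no tracial weight on $\rg(\cG)$, hence no trace on $L(\cG)$'' does not work as stated, because a faithful normal semifinite trace on $L(\cG)$ need not restrict to a densely defined (proper) weight on a norm-dense C$^*$-subalgebra (for instance, the standard trace of $\IB(\ell^2\mathbb{F}_2)$ is $+\infty$ on every nonzero positive element of $\rg(\mathbb{F}_2)$).

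Second, non-amenability: the containment $L(\mathbb{F}_2)\subset L(\cG)$ alone proves nothing, since amenability does not pass to arbitrary von Neumann subalgebras (every von Neumann algebra with separable predual embeds into the injective $\IB(\ell^2)$). You need an expectation onto a non-amenable subalgebra: the paper compresses by $p_K$ and uses the normal conditional expectation $p_K L(\cG)p_K \rightarrow L(\Lambda)^K p_K$ from Lemma \ref{Lem:ce}; alternatively one could use the open subgroup $(\Xi_1\ast\Xi_2)\times G<\cG$ and the corresponding Plancherel-weight-preserving expectation, but some such step is indispensable. Finally, two smaller corrections: $\varphi(p_K)=\mu(K)^{-1}$, not $\mu(K)$, and $\mu(K)\to 0$ as $K$ shrinks, so unboundedness of the trace in the unimodular non-discrete case comes from $\varphi(p_K)=\mu(K)^{-1}\to\infty$; and ``not type I by the ICC-type argument'' is not available for non-discrete $\cG$ --- rule out type I by non-amenability (as the paper does) or by noting that the corner $p_K L(\cG)p_K$ is an infinite-dimensional factor with a faithful finite trace.
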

\begin{proof}
We first show that $L(\cG)$ and $L(\cG_0)$ are factors.
By Proposition 2.25 of \cite{Rau}, the centralizer of the Plancherel weight on $L(\cG)$ 
is equal to $L(\cG_0)$.
Hence it suffices to show the factoriality of $L(\cG_0)$.
Let $\varphi$ be the Plancherel weight on $L(\cG_0)$.
Note that $\varphi$ is a faithful normal semifinite tracial weight on $L(\cG_0)$.
Hence if $L(\cG_0)$ is not a factor, then one has a
normal semifinite tracial weight $\psi$ which is dominated by $\varphi$
but is not a scalar multiple of $\varphi$.
This contradicts to the uniqueness of tracial weight on $\rg(\cG_0)$ (up to scalar multiple),
which follows from the proof of Theorem \ref{Thm:trace}.

We next show the non-amenability of $L(\cG)$.
Take any compact open subgroup $K<G$.
Then, by Lemma \ref{Lem:ce}, the corner $p_K L(\cG)p_K$ of $L(\cG)$
admits a conditional expectation
 \[E_K\colon p_K L(\cG) p_K \rightarrow L(\Lambda)^K p_K.\]
Since $L(\Lambda)^K p_K$ is non-amenable,
so is $L(\cG)$.

Finally we determine the Murray--von Neumann--Connes type of $L(\cG)$.
When $G$ is discrete, it is clear from Theorem \ref{Thm:trace} that $L(\cG)$ is of type I\hspace{-.01em}I$_1$.
(Alternatively, in the discrete group case, the statement follows from the fact that $\cG$ is a non-amenable ICC discrete group.)
When $G$ is non-discrete and unimodular, observe that the Plancherel weight on $L(\cG)$
is tracial and unbounded.
Since $L(\cG)$ is non-amenable, it must be of type I\hspace{-.01em}I$_\infty$.
The non-unimodular case and the last statement follow from Connes's theorem \cite{Con}
(see Theorem 2.27 in \cite{Rau}).
\end{proof}

\subsection*{Acknowledgements}
The author is grateful to Sven Raum for
helpful comments on a draft of this article and informing him the reference \cite{Wes}.
He is also grateful to Miho Mukohara for
informing him an inaccuracy in the previous proof of Theorem 4.1.
He would like to thank the referee
for helpful suggestions.
This work was supported by JSPS KAKENHI Early-Career Scientists
(No.~19K14550) and a start-up fund of Hokkaido University.

\end{document}